\newcommand{\C}{\mathbb{C}}
\newcommand{\R}{\mathbb{R}}
\newcommand{\Oo}{\mathcal{O}}
\newcommand\ko{\hspace{.1em}}
\newtheorem{theorem}{Theorem}[section]
\newtheorem{lemma}[theorem]{Lemma}
\theoremstyle{definition}
\newtheorem{definition}[theorem]{Definition}
\numberwithin{equation}{section}
\title{An elementary approach to the group law on elliptic curves}
\date{\today}
\author{Sander Zwegers}
\address{Department of Mathematics and Computer Science, University of Cologne, Weyertal 86--90, 50931 Cologne, Germany}
\email{szwegers@uni-koeln.de}
\begin{document}

\begin{abstract}
We revisit the group structure on elliptic curves and give a simple and elementary proof of the associativity of the addition.
We do this by providing an explicit formula for the sum of three points, only using the explicit definition of the group structure.
In the process we find a nice geometric interpretation of the sum of three points on the elliptic curve.
\end{abstract}

\maketitle

\section{Introduction and statement of results}

We consider the elliptic curve
\[ \{ (x,y)\in K^2 \mid y^2=x^3+ax+b\} \cup \{ \Oo\}\]
in (reduced) Weierstra\ss\ form.
Here $K$ is a field with characteristic $\neq 2,3$ and $\Oo$ is the point at infinity.
We assume $4a^3+27b^2\neq 0$, which ensures that the polynomial $x^3+ax+b$ has no multiple zeroes. 
We equip the elliptic curve with the usual group law.

\begin{lemma}\label{lem:add}
Let $P_1=(x_1,y_1)$ and $P_2=(x_2,y_2)$ be in $E\smallsetminus\{\Oo\}$ with $(x_2,y_2)\neq (x_1,-y_1)$.
We set
\[\begin{split}
x_3:=-x_1-x_2+\alpha^2\qquad\text{and}\qquad  y_3:=& -y_1-\alpha(x_3-x_1)\\
=&-y_2-\alpha(x_3-x_2),
\end{split}\]
where
\[ \alpha =\alpha(P_1,P_2):= \begin{cases} \frac{y_2-y_1}{x_2-x_1},&\text{if}\ x_1\neq x_2,\\ \frac{3x_1^2+a}{2y_1}, & \text{if}\ x_1=x_2.\end{cases}\]
Then we have $(x_3,y_3)\in E\smallsetminus\{\Oo\}$.
\end{lemma}

\begin{definition}\label{def:ell}
We define the addition $+:E\times E\longrightarrow E$ by\\
(a) $P+\Oo=\Oo+P=P$ for all $P\in E$;\\
(b) $(x,y)+(x,-y)=\Oo$ for $(x,y)\in E\smallsetminus\{\Oo\}$;\\
(c) $(x_1,y_1)+(x_2,y_2)=(x_3,y_3)$ for $(x_1,y_1),(x_2,y_2)\in E\smallsetminus\{\Oo\}$ with $(x_2,y_2)\neq (x_1,-y_1)$, where $x_3$ and $y_3$ are as in Lemma \ref{lem:add}.
\end{definition}

This describes the usual addition on the elliptic curve $E$.
The next result is well known.

\begin{theorem}
$(E,+)$ is an abelian group with neutral element $\Oo$.
\end{theorem}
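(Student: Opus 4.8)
The plan is to check the group axioms in order of increasing difficulty, reserving associativity --- which the abstract identifies as the real content --- for last.

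Closure, commutativity, the neutral-element law, and the existence of inverses are all immediate from the explicit formulas. For closure I would substitute the line $y=\alpha(x-x_1)+y_1$ into $y^2=x^3+ax+b$; the resulting monic cubic in $x$ has $x_1$ and $x_2$ among its roots (with $x_1$ a double root in case (II)), so Vieta's relation forces the three roots to sum to $\alpha^2$ and the third root to be exactly $x_3=\alpha^2-x_1-x_2$. Thus $(x_3,\alpha(x_3-x_1)+y_1)$ lies on $E$, and since $E$ is invariant under $(x,y)\mapsto(x,-y)$, so does $P_3=(x_3,y_3)$. Commutativity holds because $\alpha$, $x_3$, and --- via the two equivalent expressions already recorded for $y_3$ --- also $y_3$ are symmetric in $P_1,P_2$, while case (III) is visibly symmetric. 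That $O$ is neutral is definitional, and the inverse of $(x,y)$ is $(x,-y)$: their sum falls under case (III), including the $2$-torsion case $y=0$ where $y_1=-y_2$ still holds, and so equals $O$.

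For associativity I would follow the route the abstract advertises and produce a single closed formula for the triple sum that is manifestly symmetric. Concretely, I would compute $(P_1+P_2)+P_3$ by applying the uniform formulas twice --- first forming $Q=P_1+P_2$, then $Q+P_3$ --- and then reduce the resulting rational expressions in $x_1,\dots,y_3$, repeatedly using the curve relations $y_i^2=x_i^3+ax_i+b$, until the coordinates of the sum are displayed as rational functions invariant under every permutation of the indices $\{1,2,3\}$. Writing $G(P_1,P_2,P_3):=(P_1+P_2)+P_3$, such symmetry gives $G(P_1,P_2,P_3)=G(P_2,P_3,P_1)$, and the right-hand side equals $(P_2+P_3)+P_1=P_1+(P_2+P_3)$ by commutativity; hence the two bracketings agree.

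I expect the difficulty to be twofold. Algebraically, the reduction to symmetric form is the heavy lifting: the naive composite expression for the $x$-coordinate is a ratio of large polynomials, and the work lies in showing that its antisymmetric part collapses once the three curve relations are imposed. Conceptually, the genuine nuisance is the degenerate configurations, where a denominator $x_2-x_1$ or $2y_1$ (or its analogue at the second step) vanishes and one must switch among cases (I)--(III) --- in particular where an intermediate sum is $O$, since then one bracketing trivializes through the neutral-element law while the other, of the form $P_1+(-P_1+P_3)$, still demands a direct check amounting to a cancellation law. I would dispose of these either as a finite list of special cases verified from the formulas, or --- exploiting that we work over $\C$ --- by observing that associativity is a rational identity valid on a Zariski-dense open set where all denominators are nonzero, so that the symmetric-formula computation need only be carried out generically and then extended by continuity.
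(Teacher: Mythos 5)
Your overall architecture is exactly the paper's: dispose of closure, commutativity, neutrality and inverses by inspection of the formulas (the paper does this in one sentence), and prove associativity by exhibiting a formula for $(P_1+P_2)+P_3$ that is manifestly symmetric in the three points, then invoking commutativity to convert $(P_2+P_3)+P_1$ into $P_1+(P_2+P_3)$. Your treatment of the degenerate configurations (limits from the generic case, plus a short direct check when an intermediate sum is $O$ or $\pm$ the remaining point) also matches what the paper does or delegates to the reader.

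The gap is at the central step. You say you would ``reduce the resulting rational expressions \ldots repeatedly using the curve relations \ldots until the coordinates of the sum are displayed as rational functions invariant under every permutation,'' and you correctly flag this as the heavy lifting --- but you give no mechanism for carrying it out, and the naive expansion you describe is precisely the computation that the paper (quoting Friedl) says ``took several hours on a modern computer.'' The entire content of the paper's proof is the device that makes this reduction feasible by hand: one introduces the Vandermonde determinant $V=(x_1-x_2)(x_2-x_3)(x_3-x_1)$ and the cofactors $c_0,c_1,c_2$ of the $3\times 3$ matrices built from $1,x_i,x_i^2,y_i$, proves the one-line-of-algebra identity $\alpha+\wt\alpha=V/c_2$ (this is the only place the curve equations enter), and then shows that the $4\times4$ determinant of the matrix with rows $(1,x_i,x_i^2,y_i)$ and $(1,x_4,x_4^2,-y_4)$ vanishes --- i.e.\ that $-P_4$ lies on the parabola through $P_1,P_2,P_3$, the quadratic analogue of the collinearity underlying the chord construction. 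The symmetric formulas $x_4=-x_1-x_2-x_3+(V^2-2c_1c_2)/c_2^2$ and $y_4=-(c_2x_4^2+c_1x_4+c_0)/V$ then drop out by row reduction, with no large polynomial ever being expanded. Without this (or some comparable organizing idea), your plan either collapses back into the computer-assisted verification you were trying to avoid, or leaves the key identity unproved.
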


From the definition it is clear that the addition is commutative, that $\Oo$ is the neutral element, that the inverse of $P=(x,y)\in E\smallsetminus\{\Oo\}$ is given by $-P:=(x,-y)$ and that $-\Oo=\Oo$.
The only non-trivial part is the associativity.
One approach to this is to concentrate on elliptic curves over $\C$ and use the theory of elliptic functions, in particular the Weierstra\ss\ $\wp$-function.
See for example \cite{koblitz} or \cite{lang} for this approach.
Another possibility is to use somewhat advanced algebraic geometry, for example the Riemann--Roch theorem, the Cayley--Bacharach theorem and/or B\'ezout's theorem.
We refer the reader to \cite{fulton,husem,silverman,washington} for various geometric proofs.
In \cite{friedl} a completely elementary proof of the group law is given only using the explicit formulas in the definition of the addition.
However, the proof of the associativity requires some advanced computer-aided arguments.
The author notes:
``\ldots it turns out that the explicit computations in the proof are very hard.
The verification of some identities took several hours on a modern computer; this proof could not have been carried out before the 1980's.''

Here we give an elementary proof of the associativity, not relying on the use of computer calculations.

\begin{theorem}\label{theo:main}
For all $P_1,P_2,P_3\in E$ we have $(P_1+P_2)+P_3=P_1+(P_2+P_3)$.
\end{theorem}

For the proof we give an explicit formula for the coordinates of $(P_1+P_2)+P_3$ in the generic case.

\begin{theorem}\label{theo:sum}
Let $P_1,P_2,P_3\in E\smallsetminus\{\Oo\}$ with $P_3\neq \pm P_1$, $P_2\neq -P_1$, $P_2\neq -P_3$ and $P_1+P_2\neq -P_3$.
Then the coordinates of $P_4:=(P_1+P_2)+P_3$ are given by
\[ x_4= -x_1+x_2-x_3 + \frac1{c_2^2} - 2 \ko\frac{c_1}{c_2},\qquad y_4=-y_2-(x_4-x_2)(c_1+x_4c_2),\]
where
\[ c_1:=\frac{x_1\ko\alpha(P_2,P_3)-x_3\ko\alpha(P_1,P_2)}{x_1-x_3} ,\qquad c_2:= \frac{\alpha(P_1,P_2)-\alpha(P_2,P_3)}{x_1-x_3}.\]
\end{theorem}

The point is that these formulas are symmetric in $P_1$ and $P_3$, which  proves Theorem \ref{theo:main} in the generic case.
Note that a similar approach is used in \cite{spandaw}.
However, the formulas there involve polynomials of high degree in the coordinates of $P_1,P_2,P_3$ with thousands of terms.
Checking the associativity therefor requires a computer.
Our proof relies mostly on simple and elementary algebraic transformations.

In \cite{enge} the author remarks the following about a potential proof with a brute force computation, using the explicit algebraic formulas for adding points:
``..., the proof does not reveal anything about the underlying algebraic and geometric structures and is not only extremely tedious, but also extremely uninstructive.
This seems to have deterred  most authors, for, to my knowledge, this approach cannot be found in any publication.''
Although a bit tedious, our proof is fairly short and easy, and more interestingly, we actually find a nice geometric interpretation of the sum of three points:
we draw the parabola through the points $P_1$, $P_2$ and $P_3$; then the parabola has a fourth point of intersection with the elliptic curve, which is $-(P_1+P_2+P_3)$ (see Section \ref{sec:par} for details).

\section{Proof of Lemma \ref{lem:add} and Theorem \ref{theo:sum}}\label{sec:firstproof}

We start by proving Lemma \ref{lem:add}, which ensures that the addition on $E$ is well-defined.
First we note that if $x_1=x_2$, then we have $y_1^2=y_2^2$, so $y_1=\pm y_2$.
Since we exclude $y_2=-y_1$ in the lemma, we must have $y_1=y_2$ (so $P_1=P_2$).
Therefor $y_1\neq 0$ (otherwise $y_2=y_1=0=-y_1$), so we get $\alpha\in K$ and $(x_3,y_3)\in K^2$.
We now consider the polynomial
\[ f(x):=x^3+ax+b-\bigl(y_1+\alpha(x-x_1)\bigr)^2\in K[x].\]
Since $(x_1,y_1)$ is on the curve, we have $f(x_1)=x_1^3+ax_1+b-y_1^2=0$.
Hence $x_1$ is a zero of $f(x)$.
For the case $x_2\neq x_1$ we have $y_1+\alpha(x_2-x_1) = y_2$.
This gives $f(x_2)=x_2^3+ax_2+b-y_2^2=0$, so $x_2$ is another zero.
If we have $x_2= x_1$, then we consider the formal derivative
\[f'(x)=3x^2+a-2\alpha\bigl(y_1+\alpha(x-x_1)\bigr)\in K[x]\]
of $f(x)$.
We immediately see that $x_1$ is a zero of $f'(x)$, so $x_1$ is a double zero of $f(x)$.
In both cases we can write $f(x)$ as $f(x)=(x-x_1)(x-x_2)g(x)$, where $g(x)\in K[x]$.
Since $f(x)$ has degree 3 and leading coefficient 1, we can write $g(x)$ as $g(x)=x-x_0$ with $x_0\in K$.
We thus have
\[x^3+ax+b-(y_1+\alpha(x-x_1))^2=(x-x_1)(x-x_2)(x-x_0).\]
Comparing the coefficients of $x^2$ on the left and on the right we get $x_0+x_1+x_2=\alpha^2$, and so $x_0=x_3$ and $-(y_1+\alpha(x_0-x_1))=y_3$.
Since $x_0$ is a zero of $f(x)$, the point $(x_0,-(y_1+\alpha(x_0-x_1)))=(x_3,y_3)$ lies on the curve.
Hence $(x_3,y_3)\in K^2$ satisfies the equation $y^2=x^3+ax+b$.
Further, we notice that
\[x^3+ax+b-\bigl(y_1+\alpha(x-x_1)\bigr)^2=(x-x_1)(x-x_2)(x-x_3).\]

\begin{lemma}\label{lem:null}
Let $P_1,P_2\in E$ with $P_1+P_2=P_1$.
Then $P_2=\Oo$.
\end{lemma}

\begin{proof}
If $P_1=\Oo$, then we find $P_2=\Oo+P_2=P_1+P_2=P_1=\Oo$ as desired.
If $P_2=-P_1$, then we get $P_1=P_1+P_2=\Oo$ and again $P_2=-P_1=-\Oo=\Oo$.
Hence we must show that there exist no $P_1,P_2\in E\smallsetminus\{\Oo\}$ with $P_2\neq -P_1$ and $P_1+P_2=P_1$.
We assume that $P_2\neq -P_1$ and $P_1+P_2=P_1$ hold and use Lemma \ref{lem:add}:
we have $(x_1,y_1)=(x_3,y_3)$ and so we find $y_1=y_3=-y_1-\alpha(x_3-x_1)=-y_1$, $y_1=0$ and
\[ x^3+ax+b-\alpha^2(x-x_1)^2=(x-x_1)^2(x-x_2).\]
This means that $x_1$ is a double zero of $x^3+ax+b$, which contradicts our assumption $4a^3+27b^2\neq 0$.
\end{proof}

We now turn our attention to the proof of Theorem \ref{theo:sum}.
We set $(\widetilde x,\widetilde y):=P_1+P_2=(x_1,y_1)+(x_2,y_2)$ and $(x_4,y_4):= (\widetilde x,\widetilde y)+(x_3,y_3)=(P_1+P_2)+P_3$.
Hence we have
\[ \widetilde x =-x_1-x_2+\alpha^2,\qquad \widetilde y = -y_1-\alpha(\widetilde x-x_1),\]
where $\alpha:= \alpha(P_1,P_2)$, and
\[ x_4=-\widetilde x-x_3+\widetilde \alpha^2, \qquad y_4=-y_3-\widetilde\alpha(x_4-x_3),\]
where $\widetilde\alpha := \alpha(P_1+P_2,P_3)$.

\begin{lemma}\label{lem:c2}
Let $P_1,P_2,P_3\in E\smallsetminus\{\Oo\}$ with $P_3\neq \pm P_1$, $P_2\neq -P_1$, $P_2\neq -P_3$  and $P_1+P_2\neq -P_3$.
Further, let $c_2$ be as in Theorem \ref{theo:sum}.
Then $c_2(\alpha+\widetilde\alpha)=1$.
\end{lemma}

\begin{proof}
As in the proof of Lemma \ref{lem:add} we have 
\begin{equation}\label{eq:prodsum2}
x^3+ax+b-\bigl(y_1+\alpha(x-x_1)\bigr)^2=(x-x_1)(x-x_2)(x-\widetilde x).
\end{equation}
If $P_2\neq P_3$, then we have
\[\begin{split}
c_2&=\frac{\alpha(P_1,P_2)-\alpha(P_2,P_3)}{x_1-x_3}= \frac{\alpha - \frac{y_2-y_3}{x_2-x_3}}{x_1-x_3}\\
&= \frac{y_3-y_2+\alpha(x_2-x_3)}{(x_1-x_3)(x_2-x_3)}= \frac{y_3-y_1-\alpha(x_3-x_1)}{(x_3-x_1)(x_3-x_2)}.
\end{split}\]
If $P_1+P_2\neq P_3$, then we have
\[ \alpha+\widetilde\alpha = \alpha+ \frac{y_3-\widetilde y}{x_3-\widetilde x} = \alpha+\frac{y_3+y_1+\alpha(\widetilde x-x_1)}{x_3-\widetilde x}=\frac{y_3+y_1+\alpha( x_3-x_1)}{x_3-\widetilde x}.\]
We now consider 4 possible cases.
For the case $P_2\neq P_3$ and $P_1+P_2\neq P_3$ we directly get
\[ c_2 (\alpha+\widetilde\alpha) = \frac{y_3^2 -\bigl(y_1+\alpha(x_3-x_1)\bigr)^2}{(x_3-x_1)(x_3-x_2)(x_3-\widetilde x)}=1,\]
where for the last equality we used \eqref{eq:prodsum2} with $x=x_3$ and $y_3^2=x_3^3+ax_3+b$.\\
For the case $P_2\neq P_3$ and $P_1+P_2= P_3$ we have $(\widetilde x,\widetilde y)=(x_3,y_3)$, so $y_3=-y_1-\alpha(x_3-x_1)$ and
\[ c_2= \frac{y_3-y_1-\alpha(x_3-x_1)}{(x_3-x_1)(x_3-x_2)}=\frac{2y_3}{(x_3-x_1)(x_3-x_2)}.\]
In \eqref{eq:prodsum2} we take the derivative with respect to $x$ and set $x=\widetilde x=x_3$ to get
\[3x_3^2+a+2\alpha y_3=(x_3-x_1)(x_3-x_2),\qquad 
\alpha+\widetilde\alpha =\frac{(x_3 -x_1)(x_3-x_2)}{2y_3}\]
(as remarked in the proof of Lemma \ref{lem:add} we have $y_3\neq 0$) and $c_2 (\alpha+\widetilde\alpha) =1$.\\
For the case $P_2=P_3$ and $P_1+P_2\neq P_3$ we have $(x_2,y_2)=(x_3,y_3)$, so $y_3=y_1+\alpha(x_3-x_1)$ and
\[ \alpha+\widetilde\alpha= \frac{y_3+y_1+\alpha( x_3-x_1)}{x_3-\widetilde x}=\frac{2y_3}{x_3-\widetilde x}.\]
Again, we take the derivative in \eqref{eq:prodsum2} and set $x=x_2=x_3$ to get
\[ 3x_3^2+a-2\alpha y_3=(x_3-x_1)(x_3-\widetilde x).\]
Hence we have
\[\alpha(P_2,P_3) -\alpha(P_1,P_2) = \frac{(x_3-x_1)(x_3-\widetilde x)}{2y_2},\qquad c_2= \frac{x_3-\widetilde x}{2y_3}\]
($y_2=y_3\neq 0$) and $c_2 (\alpha+\widetilde\alpha) =1$.\\
Finally, according to Lemma \ref{lem:null} the case $P_2= P_3$ and $P_1+P_2=P_3$ can not occur.
\end{proof}

\begin{proof}[Proof of Theorem \ref{theo:sum}]
From the definition of $c_1$ and $c_2$ we immediately get
\[ c_1+x_1\ko c_2=\alpha(P_1,P_2)=\alpha\qquad \text{and}\qquad c_1+x_3 \ko c_2=\alpha(P_2,P_3).\]
Further, Lemma \ref{lem:c2} gives $\widetilde \alpha =1/c_2 -\alpha$.
Hence we get
\[ \begin{split}
x_4&= -\widetilde x-x_3+\widetilde \alpha^2 = x_1+x_2-x_3+\widetilde\alpha^2-\alpha^2= x_1+x_2-x_3 +\frac1{c_2^2} -2 \frac\alpha{c_2}\\
&= -x_1+x_2-x_3 +\frac1{c_2^2} -2 \ko\frac{\alpha-x_1\ko c_2}{c_2}=  -x_1+x_2-x_3 +\frac1{c_2^2} -2\ko \frac{c_1}{c_2}.
\end{split}\]
Also we have
\[ y_3-y_2 =\alpha(P_2,P_3)(x_3-x_2) = (x_3-x_2)(c_1+x_3c_2),\]
so
\[ \begin{split}
y_3-y_2 -(x_4-x_2)(c_1+x_4c_2) &= (x_3-x_2)(c_1+x_3c_2)-(x_4-x_2)(c_1+x_4c_2) \\
&= -(x_4-x_3)\bigl(c_1+(-x_2+x_3+x_4)\ko c_2\bigr).
\end{split}\]
With $y_4=-y_3-\widetilde\alpha(x_4-x_3)$ and $x_4=x_1+x_2-x_3+\widetilde\alpha^2-\alpha^2$ we then obtain
\[\begin{split}
y_4+y_2+(&x_4-x_2)(c_1+x_4c_2)= (x_4-x_3)\bigl( -\widetilde\alpha +c_1 +(-x_2+x_3+x_4)\ko c_2\bigr)\\
&= (x_4-x_3)\bigl( -\widetilde\alpha +c_1+x_1c_2+(\widetilde\alpha^2-\alpha^2)\ko c_2\bigr)\\
&= (x_4-x_3)\bigl( -\widetilde\alpha +\alpha+(\widetilde\alpha^2-\alpha^2)\ko c_2\bigr)= (x_4-x_3)(\alpha-\widetilde\alpha) \bigl(1-(\alpha+\widetilde\alpha)\ko c_2\bigr)=0
\end{split}\]
and $y_4=-y_2-(x_4-x_2)(c_1+x_4c_2)$.
\end{proof}

\section{Proof of Theorem \ref{theo:main}}

\begin{lemma}\label{lem:121}
For all $P,P_1,P_2\in E$ we have:\\
(a) $-(-P)=P$;\\
(b) $-(P_1+P_2)=(-P_1)+(-P_2)$;\\
(c) $(P_1+P_2)+(-P_1)=P_1+(P_2+(-P_1))=P_2$;\\
(d) if $P_1+P_2=\Oo$, then $P_2=-P_1$.
\end{lemma}

\begin{proof}
(a) For $P=(x,y)\in E\smallsetminus\{\Oo\}$ we have $-P=(x,-y)\in E\smallsetminus\{\Oo\}$, so $-(-P)=(x,y)=P$.
From $-\Oo=\Oo$ we immediately get $-(-\Oo)=\Oo$.\\
(b) If $P_1$ or $P_2$ equals $\Oo$, then the result follows directly:
\[ \begin{split}
P_1&=\Oo:\qquad -(P_1+P_2)=-(\Oo+P_2)=-P_2=\Oo+(-P_2)=(-\Oo)+(-P_2)=(-P_1)+(-P_2)\\
P_2&=\Oo:\qquad -(P_1+P_2)=-(P_1+\Oo)=-P_1=(-P_1)+\Oo= (-P_1)+(-\Oo)=(-P_1)+(-P_2)
\end{split}\]
Now let $P_1,P_2\in E\smallsetminus\{\Oo\}$.
If $P_2=-P_1$, then $-P_2=-(-P_1)=P_1$, so
\[ -(P_1+P_2)=-(P_1+(-P_1))=-\Oo=\Oo=(-P_1)+P_1=(-P_1)+(-P_2).\]
Now assume $P_2\neq -P_1$.
By definition we have
\[P_1+P_2=(-x_1-x_2+\alpha(P_1,P_2)^2,-y_1-\alpha(P_1,P_2)(x_3-x_1)).\]
Further, we have $-P_2\neq -(-P_1)$ and $-P_1=(x_1,-y_1)$, $-P_2=(x_2,-y_2)$, so $\alpha(-P_1,-P_2)=-\alpha(P_1,P_2)$ and
\[(-P_1)+(-P_2)= (-x_1-x_2+\alpha(P_1,P_2)^2,y_1+\alpha(P_1,P_2)(x_3-x_1))=-(P_1+P_2).\]
(c) We first show $(P_1+P_2)+(-P_1)=P_2$.
For $P_1=\Oo$ or $P_2=\Oo$ the result follows immediately:
$(\Oo+P_2)+(-\Oo)= P_2+\Oo=P_2$ and $(P_1+\Oo)+(-P_1)=P_1+(-P_1)=\Oo$.
Now let $P_1,P_2\in E\smallsetminus\{\Oo\}$.
We first consider the case that (b) in Definition \ref{def:ell} applies for one of the additions.
If $P_2=-P_1$, then $P_1+P_2=\Oo$ and so $(P_1+P_2)+(-P_1)=\Oo+P_2=P_2$.
On the other hand, the case $P_1+P_2=P_1$ doesn't occur, because it would contradict Lemma \ref{lem:null}.
Now assume that $P_2\neq -P_1$ and $P_1+P_2\neq P_1$ hold.
With $P_3=-P_1$ the notation as in Section \ref{sec:firstproof} applies.
We then have $P_2\neq P_3$ and $P_1+P_2\neq -P_3$.
If $P_1+P_2\neq P_3$, then (as in the proof of Lemma \ref{lem:c2})
\[ \alpha+\widetilde\alpha=\frac{y_3+y_1+\alpha( x_3-x_1)}{x_3-\widetilde x},\]
which equals 0, since $x_3=x_1$ and $y_3=-y_1$.
If $P_1+P_2= P_3$, then (again as in the proof of Lemma \ref{lem:c2})
\[\alpha+\widetilde\alpha =\frac{(x_3 -x_1)(x_3-x_2)}{2y_3},\]
which again equals 0.
In both cases we have shown $\widetilde \alpha=-\alpha$, so we get
\[ \begin{split}
x_4&=-\widetilde x -x_3+\widetilde \alpha^2=-\widetilde x-x_1+\alpha^2=x_2,\\
y_4&=-y_3-\widetilde \alpha(x_4-x_3)=y_1+\alpha(x_2-x_1)=y_2.
\end{split}\]
This shows $P_4=P_2$ and so $(P_1+P_2)+(-P_1)=(P_1+P_2)+P_3=P_4=P_2$, as desired.
Replacing $P_1$ by $-P_1$ and using (a) and the commutativity of the addition we then also get
\[ P_2= ((-P_1)+P_2)+(-(-P_1))=((-P_1)+P_2)+P_1=P_1+(P_2+(-P_1)).\]
(d) If $P_1+P_2=\Oo$, then $P_2=(P_1+P_2)+(-P_1)=\Oo+(-P_1)=-P_1$.
\end{proof}

\begin{proof}[Proof of Theorem \ref{theo:main}]
If $P_1$, $P_2$ or $P_3$ equals $\Oo$, then the result follows directly:
\[ \begin{split}
P_1&=\Oo:\qquad (P_1+P_2)+P_3=(\Oo+P_2)+P_3=P_2+P_3=\Oo+(P_2+P_3)= P_1+(P_2+P_3)\\
P_2&=\Oo:\qquad (P_1+P_2)+P_3=(P_1+\Oo)+P_3=P_1+P_3=P_1+(\Oo+P_3)= P_1+(P_2+P_3)\\
P_3&=\Oo:\qquad (P_1+P_2)+P_3=(P_1+P_2)+\Oo=P_1+P_2=P_1+(P_2+\Oo)= P_1+(P_2+P_3)
\end{split}\]
If $P_3=P_1$, then the result follows from the commutativity of the addition:
\[ (P_1+P_2)+P_3=(P_1+P_2)+P_1=P_1+(P_2+P_1)= P_1+(P_2+P_3)\]
For the case $P_3=-P_1$ the result follows from Lemma \ref{lem:121}(c):
\[ (P_1+P_2)+P_3= (P_1+P_2)+(-P_1)=P_1+(P_2+(-P_1)) = P_1+(P_2+P_3)\]
For $P_2=-P_1$ or $P_2=-P_3$ we also use Lemma \ref{lem:121}(c):
\[ \begin{split}
P_2=-P_1:\qquad (P_1+P_2)+P_3&=(P_1+(-P_1))+P_3=\Oo+P_3=P_3\\
&=P_1+(P_3+(-P_1))=P_1+((-P_1)+P_3)= P_1+(P_2+P_3)\\
P_2=-P_3:\qquad (P_1+P_2)+P_3&=(P_1+(-P_3))+P_3=P_3+(P_1+(-P_3))=P_1\\
&=P_1+\Oo=P_1+((-P_3)+P_3)= P_1+(P_2+P_3)
\end{split}\]
Next we consider the case $P_1+P_2=-P_3$.
With Lemma \ref{lem:121} we find
\[P_2+P_3=P_2+(-(-P_3))=P_2+(-(P_1+P_2))=P_2+((-P_1)+(-P_2))=-P_1\]
and so
\[(P_1+P_2)+P_3=(-P_3)+P_3=\Oo=P_1+(-P_1)=P_1+(P_2+P_3).\]
For the final case we assume $P_1,P_2,P_3\in E\smallsetminus\{\Oo\}$ with $P_3\neq \pm P_1$, $P_2\neq -P_1$, $P_2\neq -P_3$  and $P_1+P_2\neq -P_3$, which are exactly the conditions of Theorem \ref{theo:sum}.
From the previous case we get that $P_1+P_2\neq -P_3$ and $P_2+P_3\neq -P_1$ are equivalent (we have shown that $P_1+P_2=-P_3$ implies $P_2+P_3=-P_1$; the implication in the other direction follows by interchanging $P_1$ and $P_3$).
Hence the conditions of the theorem are also satisfied if we permute $P_1$ and $P_3$.
Since the formulas for $P_4$ are directly seen to be invariant under this permutation, we thus get $(P_1+P_2)+P_3= (P_3+P_2)+P_1=P_1+(P_2+P_3)$.
\end{proof}

\section{Geometric interpretation}\label{sec:par}

For $K=\R$, we have a well known geometric interpretation of the addition of two points:
let $P_1,P_2\in E\smallsetminus\{\Oo\}$ with $P_2\neq -P_1$.
We consider the line $\ell:y=y_1+\alpha (x-x_1)$ through the points $P_1$ and $P_2$.
If we have $P_2=P_1$, then this line is tangent to the elliptic curve in the point $P_1$. 
This property uniquely determines $\ell$.
As we have seen in the proof of Lemma \ref{lem:add} the line $\ell$ has a third point of intersection with the elliptic curve (counting multiplicities), which is exactly the point $(x_3,-y_3)=-(P_1+P_2)$.
If $P_2=-P_1$, then the line through $P_1$ and $P_2$ is a vertical line and the third point of intersection is the point $\Oo$ at infinity, which again equals $-(P_1+P_2)$.

Here we give a similar geometric interpretation of the addition of three points $P_1,P_2,P_3$:
we start with the generic case $P_i\neq \pm P_j$ for all $i,j$.
This implies that $x_1,x_2,x_3$ are pairwise distinct.
If we further assume $P_1+P_2\neq -P_3$, then the three points are not collinear and hence there is a unique parabola through these points.
We claim that it is given by the quadratic polynomial $p(x)=y_2+(x-x_2)(c_1+xc_2)$ (with $c_1,c_2$ as in Theorem \ref{theo:sum}):
we have $p(x_2)=y_2$ and
\[\begin{split}
p(x_1)&=y_2+(x_1-x_2)(c_1+x_1c_2)=y_2+\alpha(x_1-x_2)=y_1,\\
p(x_3)&=y_2+(x_3-x_2)(c_1+x_3c_2)=y_2+\alpha(P_2,P_3)(x_3-x_2)=y_3.
\end{split}\]
To find the points of intersection with the elliptic curve we have to solve $p(x)^2=x^3+ax+b$, which is a polynomial equation of degree 4 in $x$.
By construction $x_1$, $x_2$ and $x_3$ are solutions, so there exist $C,x_0\in \R$ with
\[f(x):=(y_2+(x-x_2)(c_1+xc_2))^2-(x^3+ax+b)= C(x-x_1)(x-x_2)(x-x_3)(x-x_0).\]
Comparing the coefficients of $x^4$ and $x^3$ on the left and on the right we find $c_2^2=C$ and $2c_2(c_1-c_2x_2)-1=-C(x_1+x_2+x_3+x_0)$ and so
\[ x_0=-x_1-x_2-x_3-\frac2{c_2}(c_1-c_2x_2)+\frac1{c_2^2}=-x_1+x_2-x_3+\frac1{c_2^2}-2\ko\frac{c_1}{c_2}=x_4,\]
(by Theorem \ref{theo:sum}).
Hence the  fourth point of intersection is $(x_4,p(x_4))=(x_4,-y_4)=-(P_1+P_2+P_3)$ (again using Theorem \ref{theo:sum}).
For the case $P_1=P_2\neq P_3$ we can basically use the same argument, we just have to account for multiplicities:
we have $p'(x_1)=c_1+x_1c_2=\alpha$, so the parabola and the elliptic curve have the same tangent at $P_1=P_2$.
Again, this uniquely determines the polynomial $p(x)$.
Further, $f'(x_1)=2\alpha y_1-(3x_1^2+a)=0$, so $x_1$ is a double zero of $f(x)$ and we have
\[(y_2+(x-x_2)(c_1+xc_2))^2-(x^3+ax+b)= C(x-x_1)^2(x-x_3)(x-x_0).\]
As before the fourth point of intersection is $(x_4,-y_4)=-(P_1+P_2+P_3)$.
With the associativity and a permutation of the indices this covers all cases where exactly two points are equal.

If for example $P_2=-P_1$, then $P_1$ and $P_2$ are on a vertical line and we have $-(P_1+P_2+P_3)=-P_3$, which is again the fourth point of intersection, where the parabola through $P_1$, $P_2$ and $P_3$ is understood to consist of two vertical lines.

Finally, we briefly discuss the case $P_1=P_2=P_3$.
The polynomial $p(x)=y_1+(x-x_1)(c_1+xc_2)$ with
\[ c_2=\frac{3x_1-\alpha^2}{2y_1}, \qquad c_1=\alpha -c_2 x_1\]
fits the elliptic curve in the point $P_1$ up to the second order derivative.
With the same computation as before we find that the fourth point of intersection is $(x_4,p(x_4))$, with
\[ x_4=-x_1+\frac1{c_2^2}-2\ko\frac{c_1}{c_2}.\]
By a calculation similar to the one in Theorem \ref{theo:sum} (or seeing this as limit of the case $P_1=P_2\neq P_3$), we can show that this equals $-3P_1$.
We leave the details to the reader.

\end{document}